\newtheorem{corollary*}{Corollary}
\theoremstyle{definition}
\newtheorem{definition}{Definition}
\theoremstyle{remark}
\newtheorem*{remark}{Remark}
\newcommand \M {\mathcal{M}}
\newcommand \ord{\operatorname{ord}}
\title{On irreducible meanders growth rate}
\author{Yury Belousov and Andrei Malyutin}
\thanks{The research is supported by the Foundation for the Advancement of Theoretical Physics and Mathematics ``BASIS''. The first author is partially supported by International Laboratory of Cluster Geometry NRU HSE, RF Government grant, ag. № 075-15-2021-608 from 08.06.2021}
\address{Yury Belousov\\
Faculty of Mathematics, National Research University HSE}
\email{bus99@yandex.ru}
\address{Andrei Malyutin\\
St.\,Petersburg Department of 
Steklov Institute of Mathematics\\
St.~Petersburg State University}
\email{malyutin@pdmi.ras.ru}
\begin{document}
\maketitle
\begin{abstract}
In this article, we provide upper and lower bounds for the growth rate of irreducible meanders.
The obtained upper bound implies that the proportion of irreducible meanders among all of the prime meanders of order $n$ approaches $0$ as $n$ approaches infinity.
\end{abstract}
\section{Introduction}
This paper is based on an analogy between decompositions of knots and recently discovered decomposition of meanders. In knot theory a well known theorem due to Schubert states that each knot admits a unique decomposition into prime knots. There are three types of prime knots: hyperbolic, torus, and satellite ones. There is a next level of decomposition (called JSJ decomposition) --- any satellite knot could be constructed from hyperbolic and torus ones. We can catch an analogy of this decomposition in the theory of meanders. The operation of connected sum of knots is analogous to the operation of concatenation of meanders (see Fig.~\ref{fig:concatenation}). But if an open meander cannot be constructed as a concatenation of two meanders it still can be decomposed into two types of \emph{simple} elements: we call them snakes and irreducible meanders. Irreducible meanders are analogous to hyperbolic knots, and snakes similar to torus knots. 

\begin{figure}[h]
    \centering
        \begin{tikzpicture}
        \node (plus) at (2.8, 0) {+};
            \draw (-0.5, 0) to (2.5, 0);
            \draw[thick] (-0.5,0.666667) to [out = 0, in = 90] (2, 0)
                to [out = -90, in = -90, distance = 12.5664] (1, 0)
                to [out = 90, in = 90, distance = 12.5664] (0, 0)
                to [out = -90, in = 180] (2.5, -0.666667);
        \end{tikzpicture}
        \begin{tikzpicture}
        \node (plus) at (2.8, 0) {=};
            \draw (-0.5, 0) to (2.5, 0);
            \draw[thick] (-0.5,0.666667) to [out = 0, in = 90] (2, 0)
                to [out = -90, in = -90, distance = 12.5664] (1, 0)
                to [out = 90, in = 90, distance = 12.5664] (0, 0)
                to [out = -90, in = 180] (2.5, -0.666667);
        \end{tikzpicture}
        \begin{tikzpicture}
            \draw (-0.5, 0) to (3, 0);
            \draw[thick] (-0.5,0.333333) to [out = 0, in = 90] (1, 0)
                to [out = -90, in = -90, distance = 6.28318] (0.5, 0)
                to [out = 90, in = 90, distance = 6.28318] (0, 0)
                to [out = -90, in = -90, distance = 31.4159] (2.5, 0)
                to [out = 90, in = 90, distance = 6.28318] (2, 0)
                to [out = -90, in = -90, distance = 6.28318] (1.5, 0)
                to [out = 90, in = 180] (3, 0.333333);
        \end{tikzpicture}

    \caption{Concatenation of two meanders.}
    \label{fig:concatenation}
\end{figure}
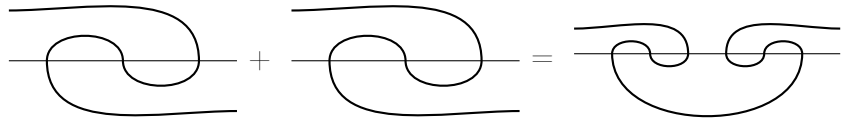

Torus knots are known to be rare (and so are snakes: there are precisely $1 + n\ \mathrm{mod}\ 2$ snakes of order $n$), And what can we say about hyperbolic knots? It was a conjecture of Adams~\cite{Ad94} stating that the proportion of hyperbolic knots among all of the prime knots of $n$ or fewer crossings converges to $1$ as $n$ approaches infinity. In a recent paper~\cite{BM19} we have disproved this conjecture. We conjecture, that this proportion converges to zero, but we are not able to prove it. We can ask analogous question about meanders, Does the proportion of irreducible meanders among all of the prime meanders of order $n$ converges to $0$ as $n$ approaches infinity? The answer to this question is the main result of the present paper. 

\subsection{Definitions}
\begin{definition}
An \emph{open meander} $(D, \{p_1, p_2,p_3,p_4\}, \{m, l\})$ is a triple of \begin{itemize}
    \item 2-dimensional disk $D$;
    \item four distinct points $p_1, p_2,p_3,p_4$ on $\partial D$ such that there exists a connected component of $\partial D \setminus \{p_1, p_2\}$ containing $\{p_3,p_4\}$;
    \item images $m$ and $l$ of smooth proper embeddings of the segment $[0;\,1]$ into $D$ such that $\partial m = \{p_1,p_3\}$, $\partial l = \{p_2,p_4\}$, and $m$ and $l$ intersect only transversally.
\end{itemize}
\end{definition}
\begin{definition}
We say that two meanders $$M=(D,\{p_1,p_2,p_3,p_4\}, \{m,l\})$$ and $$M'= (D',\{p_1',p_2',p_3',p_4'\},\{m',l'\})$$ are \emph{equivalent} if there exists a homeomorphism $f:D\to D'$ such that $f(m)=m'$, $f(l)=l'$, and $f(p_i)=p_i'$ for each $i=1,\dots,4$.  
\end{definition}
\begin{definition}
If $M=(D, m,l,p_1,p_2,p_3,p_4)$ is a meander, by the \emph{order of $M$} (denoted by $\ord(M)$) we mean the number of intersection points of $m$ and $l$. By $\M_n$ we denote the number of all equivalence classes of meanders of order $n$.
\end{definition}

In this paper we are going to present meanders via permutations, as intersection points has natural labeling by the integer numbers from $1$ to $n$ (see~\cite{C03} for details). We will also use some standard facts from the theory of meanders. The facts that we use can be found, e.\,g., in~\cite{C03}. 

Now we will give some new definitions. 

\begin{definition}[Irreducible meanders] \label{def:irreducible meander}
The meander with permutation $(a_1,\dots,a_n)$ is \emph{irreducible} if there is no $k_1, k_2 \in \mathbb{N}$ such that 
\begin{enumerate}
    \item $2 < k_2 - k_1 < n$,
    \item $\max\limits_{k_1\leq l \leq k_2} a_l\  -\   \min\limits_{k_1 \leq l \leq k_2} a_l = k_2 - k_1.$ 
\end{enumerate}
We denote the number of all equivalence classes irreducible meanders with $n$ double points by $\M^{(Irr)}_n$.
\end{definition}

\begin{definition}[Odd inserts]
Let $M$ be an open meander of order $n$ with permutation $(a_1, \dots, a_n)$, and let $M'$ be an open meander of odd order $m$ with permutation $(b_1,\dots,b_m)$. Let $M''$ be a meander with permutation $$(a_1',\dots,a_k', a_k + b_1, a_k + b_2, \dots, a_k + b_m,  a_{k+1}',\dots, a_n'),$$ where $$a_k' = 
\begin{cases}
a_k & a_k < k,\\
a_k + m & a_k > k.
\end{cases}$$ 
Then we say that $M''$ is obtained by the \emph{insert} of $M'$ into $M$ at the point $a_k$.
\end{definition}

\begin{definition}[Even inserts]
Let $M$ be an open meander of order $n$ with permutation $(a_1, \dots, a_n)$, let $M'$ be an open meander of even order $m$ with permutation $(b_1,\dots,b_m)$, and let $k$ be an integer such that $|a_k - a_{k+1}| = 1$. Let $M''$ be a meander with permutation $$(a_1',\dots,a_k', a_k + b_1', a_k + b_2', \dots, a_k + b_m',  a_{k+2}',\dots, a_n'),$$ where $$a_k' = 
\begin{cases}
a_k & a_k < k,\\
a_k + m & a_k > k
\end{cases}$$ and
$$b_k' = 
\begin{cases}
b_k & a_k < a_{k+1},\\
b_{m+1 - k} & a_k > a_{k+1}.
\end{cases}$$ 
Then we say that $M''$ is obtained by the \emph{insert} of $M'$ into $M$ at the point $a_k$.
\end{definition}

\begin{remark}
Definitions 1--3 have been inspired by geometric ideas, which will be explained in detail in the upcoming paper~\cite{B21}.
\end{remark}

\section{Growth rate of irreducible meanders}

\subsection{An upper bound on the growth rate of irreducible meanders}
Let $M$ be an irreducible meander of order $n$. We can choose $\frac{n}{k}$ distinct  double points and insert at each of this points a meander with permutation $(3,2,1)$ to obtain an open meander of order $n + \frac{2n}{k}$. Moreover, distinct subsets of double points produce non-equivalent meanders.
Thus we have the following inequalities:
\begin{align*}
    \M_{n+\frac{2n}{k}} &\geq  \binom{n}{\frac{n}{k}}\M^{(Irr)}_n,\\
    \lim_{n\to \infty} \sqrt[n]{\M_{n+\frac{2n}{k}}} &\geq  \limsup_{n\to \infty}  \sqrt[n]{\binom{n}{\frac{n}{k}}\M^{(Irr)}_n},\\
    \left(\lim_{n\to \infty} \sqrt[n]{\M_{n}}\right)^{\frac{k+2}{k}} &\geq   \frac{k}{(k-1)^{\frac{k-1}{k}}} \limsup_{n\to \infty} \sqrt[n]{\M^{(Irr)}_n},\\
    \limsup_{n\to \infty} \sqrt[n]{\M^{(Irr)}_n} &\leq \frac{(k-1)^{\frac{k-1}{k}}}{k} \left(\lim_{n\to \infty} \sqrt[n]{\M_{n}}\right)^{\frac{k+2}{k}}.
\end{align*}
It is proved in \cite{AP05} that $\lim\limits_{n\to \infty} \sqrt[n]{\Bar{\M}_{n}}\leq 12.901$, where $\Bar{\M}_{n}$ is the number of closed meanders with precisely $2n$ double points. It can be easily seen that $\M_{2n-1} = \Bar{\M}_{n}$ and $\Bar{\M}_{n} \leq \M_{2n} \leq n\Bar{\M}_{n}$ (see~\cite{C03} for details). From this we obtain the following estimate: $\lim\limits_{n\to \infty} \sqrt[n]{\M_n} \leq \sqrt{12.901}$. 
Now we get 
\begin{align}
\label{eq:lower bound}
     \limsup_{n\to \infty} \sqrt[n]{\M^{(Irr)}_n} \leq \frac{(k-1)^{\frac{k-1}{k}}}{k} 12.901^{\frac{k+2}{2k}}
\end{align}
for each $k>1$. The function on the left side of Eq.~$\eqref{eq:lower bound}$ reaches the minimum at $k\approx 13.901$, and finally we have the following estimate:
\begin{equation}
    \limsup_{n\to \infty} \sqrt[n]{\M^{(Irr)}_n} \leq 3.33341.
\end{equation}
\begin{remark}
This estimate could be improved if we insert more complicated meanders instead of meanders with permutation $(3,2,1)$.
\end{remark}
\begin{corollary*}
We have $\lim\limits_{n\to \infty} \frac{\M^{(Irr)}_n}{\M_n} = 0.$
\end{corollary*}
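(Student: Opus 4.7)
The plan is to upgrade the bound~\eqref{eq:lower bound} to the strict inequality $\limsup_{n\to\infty}\sqrt[n]{\M_n^{(Irr)}} < \alpha$, where $\alpha:=\lim_{n\to\infty}\sqrt[n]{\M_n}$, from which the corollary follows by an elementary comparison of exponentials. As in the preceding derivation, existence of $\alpha$ is tacitly used; I would justify it by noting that the concatenation of Figure~\ref{fig:concatenation} yields $\M_{n+m}\geq\M_n\M_m$ (distinct ordered pairs produce non-equivalent concatenations, by the prime-decomposition analogue mentioned in the introduction), so Fekete's lemma applies. The cited estimate of~\cite{AP05} then gives $\alpha\leq\sqrt{12.901}$.

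The key step is to rewrite the inequality preceding~\eqref{eq:lower bound} in the form
$$
\limsup_{n\to\infty}\sqrt[n]{\M_n^{(Irr)}} \;\leq\; \alpha\cdot\frac{(k-1)^{(k-1)/k}}{k}\,\alpha^{2/k}\quad\text{for every }k>1,
$$
and to exhibit one value of $k$ for which the multiplier $\frac{(k-1)^{(k-1)/k}}{k}\alpha^{2/k}$ is strictly below $1$. This is equivalent to $\alpha^{2} < k^{k}/(k-1)^{k-1}$; since $\alpha^{2}\leq 12.901$ and the right-hand side grows unboundedly with $k$, already exceeding $12.901$ at $k=6$ (where $6^{6}/5^{5}\approx 14.93$), such a $k$ is immediate. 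Hence $\limsup_{n\to\infty}\sqrt[n]{\M_n^{(Irr)}} < \alpha$ strictly.

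Given this strict inequality, the conclusion is routine: choose $\varepsilon>0$ with $\bigl(\limsup_{n\to\infty}\sqrt[n]{\M_n^{(Irr)}}\bigr)+2\varepsilon < \alpha$; then for all sufficiently large $n$ one has $\M_n^{(Irr)} < (\alpha-2\varepsilon)^{n}$ and $\M_n > (\alpha-\varepsilon)^{n}$, so $\M_n^{(Irr)}/\M_n < \bigl((\alpha-2\varepsilon)/(\alpha-\varepsilon)\bigr)^{n}\to 0$. The main subtlety in this plan is the tacit use of the existence of the limit $\alpha$, which should be verified once via Fekete's lemma as indicated above; the remaining numerical step is an elementary check on the monotone unbounded sequence $k^{k}/(k-1)^{k-1}$.
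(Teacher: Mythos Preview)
Your proof is correct, and the final deduction from the strict inequality $\limsup_{n}\sqrt[n]{\M_n^{(Irr)}}<\alpha$ to $\M_n^{(Irr)}/\M_n\to 0$ matches what the paper has in mind. The route to that strict inequality, however, differs. The paper simply quotes from~\cite{AP05} the numerical \emph{lower} bound $\alpha>3.37343$ and compares it with the already established numerical upper bound $\limsup_{n}\sqrt[n]{\M_n^{(Irr)}}\le 3.33341$. You instead keep $\alpha$ abstract in the inequality preceding~\eqref{eq:lower bound}, rewrite the right-hand side as $\alpha\cdot\bigl((k-1)^{(k-1)/k}/k\bigr)\alpha^{2/k}$, and observe that the multiplier drops below $1$ once $k^{k}/(k-1)^{k-1}>\alpha^{2}$; since that sequence is unbounded and $\alpha^{2}\le 12.901$, any $k\ge 6$ works. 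Your argument therefore uses only the \emph{upper} bound on $\alpha$ (equivalently, finiteness of $\alpha$) and shows that the gap $\limsup_{n}\sqrt[n]{\M_n^{(Irr)}}<\alpha$ is a structural consequence of the insertion construction rather than a numerical accident; in particular it would survive any future improvement of either bound in~\cite{AP05}. The paper's version is shorter because the needed numbers are already on the page, but yours is more robust and self-contained. Your side remark on Fekete's lemma is also well taken: the existence of $\alpha$ is indeed used (tacitly) in the chain of inequalities above~\eqref{eq:lower bound}.
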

\begin{proof}
From the work~\cite{AP05} it follows that $\lim\limits_{n\to \infty} \sqrt[n]{\M_n} > 3.37343$. Now the corollary easily follows from the fact that $\limsup\limits_{n\to \infty} \sqrt[n]{\M^{(Irr)}_n}~<~\lim\limits_{n\to \infty} \sqrt[n]{\M_n}$.
\end{proof}
\begin{remark}
The growth rates of open meanders and of prime\footnote{A meander of order $n$ with permutation $(a_1,\dots,a_n)$ is \emph{prime} if there is no $k$ such that $1<k\leq n$, $\max\limits_{1\leq l < k} a_l\  -\   \min\limits_{1 \leq l <k} a_l = k - 1$ and $a_1\neq 1$.} 
open meanders are the same. Let $M$ be an open meander with permutation $(a_1,a_2,\dots,a_n)$, then if $n$ is even, a meander with permutation  $(a_1+2,a_2+2,\dots,a_n+2, n+3, 2,1)$ is indeed prime. If $n$ is odd then a meander with permutation $(a_1+2,a_2+2,\dots,a_n+2, 2,1)$ is prime. This implies that the proportion of irreducible meanders among all of the prime meanders of order $n$ converges to $0$ as $n$ approaches infinity. 
\end{remark}

\subsection{A lower bound on the growth rate of irreducible meanders}
Let us first describe the main idea of this subsection non-formally. We are going to construct an irreducible meander of order approximately $2n$ starting from a given meander $M$ of order $n$. 
So, let~$M$ be an arbitrary meander of order $n$, where $n$ is odd. We can construct irreducible meanders of orders $2n+32$ and $2n+35$ from $M$ by the following procedure. 
Consider a concatenation of a meander with the permutation $$(11, 10, 1, 2, 9, 12, 13, 8, 3, 4, 7, 14,15, 6, 5)$$ and $M$ (see example in Fig.~\ref{fig:example}(a), where $M$ is the meander with the permutation $(1,2,3,4,5)$). If we add another double point between the points $13$ and $14$ (see Fig.~\ref{fig:example}(b))\footnote{Note that after this operation $M$ is not a meander anymore!}, and then double the whole meander as in Fig.~\ref{fig:example}(c), we will obtain an irreducible meander of order $2n+32$. If after that we add three more double points in a suitable way (see example in Fig.~\ref{fig:example}(d)) then the resulting meander will be irreducible of order $2n+35$. If we start from a meander of even order then the procedure is a bit different but almost the same. 

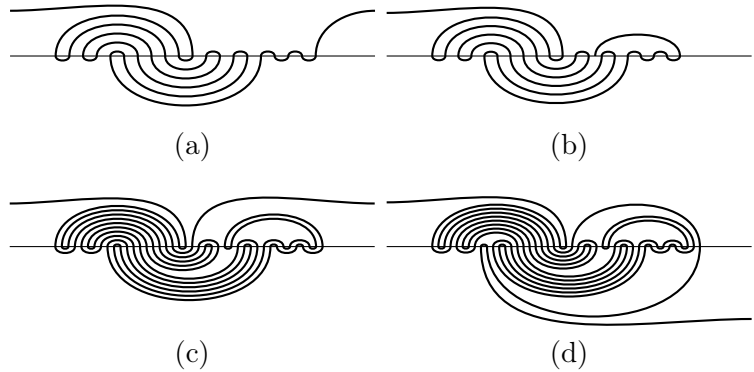
\begin{figure}[h]
    \centering
        \begin{tikzpicture}[scale = 1.2]
            \node (a) at (1.5, -1) {(a)};
            \draw (-0.5, 0) to (3.5, 0);
            \draw[thick] (-0.5,0.5) to [out = 0, in = 90] (1.5, 0)
            to [out = -90, in = -90, distance = 1.88496] (1.35, 0)
            to [out = 90, in = 90, distance = 16.9646] (0, 0)
            to [out = -90, in = -90, distance = 1.88496] (0.15, 0)
            to [out = 90, in = 90, distance = 13.1947] (1.2, 0)
            to [out = -90, in = -90, distance = 5.65487] (1.65, 0)
            to [out = 90, in = 90, distance = 1.88496] (1.8, 0)
            to [out = -90, in = -90, distance = 9.42478] (1.05, 0)
            to [out = 90, in = 90, distance = 9.42478] (0.3, 0)
            to [out = -90, in = -90, distance = 1.88496] (0.45, 0)
            to [out = 90, in = 90, distance = 5.65487] (0.9, 0)
            to [out = -90, in = -90, distance = 13.1947] (1.95, 0)
            to [out = 90, in = 90, distance = 1.88496] (2.1, 0)
            to [out = -90, in = -90, distance = 16.9646] (0.75, 0)
            to [out = 90, in = 90, distance = 1.88496] (0.6, 0)
            to [out = -90, in = -90, distance = 20.7345] (2.25, 0)
            to [out = 90, in = 90, distance = 1.88496] (2.4, 0)
            to [out = -90, in = -90, distance = 1.88496] (2.55, 0)
            to [out = 90, in = 90, distance = 1.88496] (2.7, 0)
            to [out = -90, in = -90, distance = 1.88496] (2.85, 0)
            to [out = 90, in = 180] (3.5, 0.5);
        \end{tikzpicture}
        \begin{tikzpicture}[scale = 1.2]
            \node (b) at (1.5, -1) {(b)};
            \draw (-0.5, 0) to (3.5, 0);
            \draw[thick] (-0.5,0.47619) to [out = 0, in = 90] (1.42857, 0)
            to [out = -90, in = -90, distance = 1.7952] (1.28571, 0)
            to [out = 90, in = 90, distance = 16.1568] (0, 0)
            to [out = -90, in = -90, distance = 1.7952] (0.142857, 0)
            to [out = 90, in = 90, distance = 12.5664] (1.14286, 0)
            to [out = -90, in = -90, distance = 5.38559] (1.57143, 0)
            to [out = 90, in = 90, distance = 1.7952] (1.71429, 0)
            to [out = -90, in = -90, distance = 8.97598] (1, 0)
            to [out = 90, in = 90, distance = 8.97598] (0.285714, 0)
            to [out = -90, in = -90, distance = 1.7952] (0.428571, 0)
            to [out = 90, in = 90, distance = 5.38559] (0.857143, 0)
            to [out = -90, in = -90, distance = 12.5664] (1.85714, 0)
            to [out = 90, in = 90, distance = 1.7952] (2, 0)
            to [out = -90, in = -90, distance = 16.1568] (0.714286, 0)
            to [out = 90, in = 90, distance = 1.7952] (0.571429, 0)
            to [out = -90, in = -90, distance = 19.7472] (2.14286, 0)
            to [out = 90, in = 90, distance = 1.7952] (2.28571, 0)
            to [out = -90, in = -90, distance = 1.7952] (2.42857, 0)
            to [out = 90, in = 90, distance = 1.7952] (2.57143, 0)
            to [out = -90, in = -90, distance = 1.7952] (2.71429, 0)
            to [out = 90, in = 90, distance = 8.97598] (1.785715, 0);
        \end{tikzpicture}
        \begin{tikzpicture}[scale = 1.2]
            \node (c) at (1.5, -1.2) {(c)};
            \draw (-0.5, 0) to (3.5, 0);
            \draw[thick] (-0.5,0.47619) to [out = 0, in = 90] (1.42857, 0)
            to [out = -90, in = -90, distance = 0.897598] (1.35714, 0)
            to [out = 90, in = 90, distance = 17.0544] (0, 0)
            to [out = -90, in = -90, distance = 2.69279] (0.214286, 0)
            to [out = 90, in = 90, distance = 11.6688] (1.14286, 0)
            to [out = -90, in = -90, distance = 6.28318] (1.64286, 0)
            to [out = 90, in = 90, distance = 0.897598] (1.71429, 0)
            to [out = -90, in = -90, distance = 8.07838] (1.07143, 0)
            to [out = 90, in = 90, distance = 9.87358] (0.285714, 0)
            to [out = -90, in = -90, distance = 2.69279] (0.5, 0)
            to [out = 90, in = 90, distance = 4.48799] (0.857143, 0)
            to [out = -90, in = -90, distance = 15.2592] (2.07143, 0)
            to [out = 90, in = 90, distance = 0.897598] (2.14286, 0)
            to [out = -90, in = -90, distance = 17.0544] (0.785714, 0)
            to [out = 90, in = 90, distance = 2.69279] (0.571429, 0)
            to [out = -90, in = -90, distance = 22.4399] (2.35714, 0)
            to [out = 90, in = 90, distance = 0.897598] (2.42857, 0)
            to [out = -90, in = -90, distance = 2.69279] (2.64286, 0)
            to [out = 90, in = 90, distance = 0.897598] (2.71429, 0)
            to [out = -90, in = -90, distance = 2.69279] (2.92857, 0)
            to [out = 90, in = 90, distance = 13.464] (1.85714, 0)
            to [out = -90, in = -90, distance = 0.897598] (1.92857, 0)
            to [out = 90, in = 90, distance = 11.6688] (2.85714, 0)
            to [out = -90, in = -90, distance = 0.897598] (2.78571, 0)
            to [out = 90, in = 90, distance = 2.69279] (2.57143, 0)
            to [out = -90, in = -90, distance = 0.897598] (2.5, 0)
            to [out = 90, in = 90, distance = 2.69279] (2.28571, 0)
            to [out = -90, in = -90, distance = 20.6448] (0.642857, 0)
            to [out = 90, in = 90, distance = 0.897598] (0.714286, 0)
            to [out = -90, in = -90, distance = 18.8496] (2.21429, 0)
            to [out = 90, in = 90, distance = 2.69279] (2, 0)
            to [out = -90, in = -90, distance = 13.464] (0.928571, 0)
            to [out = 90, in = 90, distance = 6.28318] (0.428571, 0)
            to [out = -90, in = -90, distance = 0.897598] (0.357143, 0)
            to [out = 90, in = 90, distance = 8.07838] (1, 0)
            to [out = -90, in = -90, distance = 9.87358] (1.78571, 0)
            to [out = 90, in = 90, distance = 2.69279] (1.57143, 0)
            to [out = -90, in = -90, distance = 4.48799] (1.21429, 0)
            to [out = 90, in = 90, distance = 13.464] (0.142857, 0)
            to [out = -90, in = -90, distance = 0.897598] (0.0714286, 0)
            to [out = 90, in = 90, distance = 15.2592] (1.28571, 0)
            to [out = -90, in = -90, distance = 2.69279] (1.5, 0)
            to [out = 90, in = 180] (3.5, 0.47619);
        \end{tikzpicture}
        \begin{tikzpicture}[scale = 1.2]
            \node (d) at (1.5, -1.2) {(d)};
            \draw (-0.5, 0) to (3.5, 0);
            \draw[thick] (-0.5,0.488889) to [out = 0, in = 90] (1.46667, 0)
            to [out = -90, in = -90, distance = 0.837758] (1.4, 0)
            to [out = 90, in = 90, distance = 17.5929] (0, 0)
            to [out = -90, in = -90, distance = 2.51327] (0.2, 0)
            to [out = 90, in = 90, distance = 12.5664] (1.2, 0)
            to [out = -90, in = -90, distance = 5.86431] (1.66667, 0)
            to [out = 90, in = 90, distance = 0.837758] (1.73333, 0)
            to [out = -90, in = -90, distance = 7.53982] (1.13333, 0)
            to [out = 90, in = 90, distance = 10.8909] (0.266667, 0)
            to [out = -90, in = -90, distance = 2.51327] (0.466667, 0)
            to [out = 90, in = 90, distance = 5.86431] (0.933333, 0)
            to [out = -90, in = -90, distance = 14.2419] (2.06667, 0)
            to [out = 90, in = 90, distance = 0.837758] (2.13333, 0)
            to [out = -90, in = -90, distance = 15.9174] (0.866667, 0)
            to [out = 90, in = 90, distance = 2.51327] (0.666667, 0)
            to [out = -90, in = -90, distance = 20.944] (2.33333, 0)
            to [out = 90, in = 90, distance = 0.837758] (2.4, 0)
            to [out = -90, in = -90, distance = 2.51327] (2.6, 0)
            to [out = 90, in = 90, distance = 0.837758] (2.66667, 0)
            to [out = -90, in = -90, distance = 2.51327] (2.86667, 0)
            to [out = 90, in = 90, distance = 12.5664] (1.86667, 0)
            to [out = -90, in = -90, distance = 0.837758] (1.93333, 0)
            to [out = 90, in = 90, distance = 10.8909] (2.8, 0)
            to [out = -90, in = -90, distance = 0.837758] (2.73333, 0)
            to [out = 90, in = 90, distance = 2.51327] (2.53333, 0)
            to [out = -90, in = -90, distance = 0.837758] (2.46667, 0)
            to [out = 90, in = 90, distance = 2.51327] (2.26667, 0)
            to [out = -90, in = -90, distance = 19.2684] (0.733333, 0)
            to [out = 90, in = 90, distance = 0.837758] (0.8, 0)
            to [out = -90, in = -90, distance = 17.5929] (2.2, 0)
            to [out = 90, in = 90, distance = 2.51327] (2, 0)
            to [out = -90, in = -90, distance = 12.5664] (1, 0)
            to [out = 90, in = 90, distance = 7.53982] (0.4, 0)
            to [out = -90, in = -90, distance = 0.837758] (0.333333, 0)
            to [out = 90, in = 90, distance = 9.21534] (1.06667, 0)
            to [out = -90, in = -90, distance = 9.21534] (1.8, 0)
            to [out = 90, in = 90, distance = 2.51327] (1.6, 0)
            to [out = -90, in = -90, distance = 4.18879] (1.26667, 0)
            to [out = 90, in = 90, distance = 14.2419] (0.133333, 0)
            to [out = -90, in = -90, distance = 0.837758] (0.0666667, 0)
            to [out = 90, in = 90, distance = 15.9174] (1.33333, 0)
            to [out = -90, in = -90, distance = 2.51327] (1.53333, 0)
            to [out = 90, in = 90, distance = 17.5929] (2.93333, 0)
            to [out = -90, in = -90, distance = 29.3215] (0.6, 0)
            to [out = 90, in = 90, distance = 0.837758] (0.533333, 0)
            to [out = -90, in = 180] (3.5, -0.8);
        \end{tikzpicture}
    \caption{Constructing an irreducible meander from a given one.}
    \label{fig:example}
\end{figure}

The simplest way to formalize this is to describe this procedure on the level of permutations.
Let $M$ be an arbitrary open meander of odd order $n$ with permutation  $(a_1,\dots,a_n)$ . Then the meanders with permutations 
\begin{align*}
    (&21,\ 20,\  1,\  4,\ 17,\ 24,\ 25,\ 16,\  5,\  8,\ 13,\ 30,\ 31,\ 12,\  9, \\
    &32+2a_1,\ 32+2a_2-1,\ 32+2a_3,\ \dots,\ 32+2a_n,\\
    &27,\ 28,\\
    &32+2a_n-1,\ 32+2a_{n-1},\ 32+2a_{n-2}-1,\ \dots,\ 32+2a_1-1,\\
    &10,\ 11,\ 32,\ 29,\ 14,\  7,\  6,\ 15,\ 26,\ 23,\ 18,\  3,\  2,\ 19,\ 22)
\end{align*}
and
\begin{align*}
    (&23,\ 22,\  1,\  4,\ 19,\ 26,\ 27,\ 18,\  5,\  8,\ 15,\ 32,\ 33,\ 14,\ 11,\\
    &34+2a_1,\ 34+2a_2-1,\ 34+2a_3,\ \dots,\ 34+2a_n,\\
    &29,\ 30,\\
    &34+2a_n-1,\ 34+2a_{n-1},\ 34+2a_{n-2}-1,\ \dots,\ 34+2a_1-1,\\
    &12,\ 13,\ 34,\ 31,\ 16,\  9,\  8,\ 17,\ 28,\ 25,\ 20,\  5,\  4,\ 21,\ 24,\\
    &2n+35,\ 10,\ 9)
\end{align*}
are irreducible meanders of orders $2n+32$ and $2n+35$, respectively. The proof of irreducibleity is straightforward by Definition~\ref{def:irreducible meander}. 

If we start from a meander with permutation $(a_1,\dots,a_n)$ of even order then the permutations 
\begin{align*}
    (&23,\ 22,\  1,\  4,\ 19,\ 26,\ 27,\ 18,\  5,\  8,\ 15,\ 30,\ 31,\ 14,\ 11,\\
    &32+2a_1,\ 32+2a_2-1,\ 32+2a_3,\ \dots,\ 32+2a_n,\\
    &10,\ 9,\\
    &32+2a_n-1,\ 32+2a_{n-1},\ 32+2a_{n-2}-1,\ \dots,\ 32+2a_1-1,\\
    &12,\ 13,\ 32,\ 29,\ 16,\  7,\  6,\ 17,\ 28,\ 25,\ 20,\  3,\  2,\ 21,\ 24)
\end{align*}
and
\begin{align*}
    (&25,\ 24,\  1,\  4,\ 21,\ 28,\ 29,\ 20,\ 7,\ 10,\ 17,\ 32,\ 33,\ 16,\ 13,\\
    &34+2a_1,\ 34+2a_2-1,\ 34+2a_3,\ \dots,\ 34+2a_n,\\
    &12,\ 11,\\
    &34+2a_n-1,\ 34+2a_{n-1},\ 34+2a_{n-2}-1,\ \dots,\ 34+2a_1-1,\\
    &14,\ 15,\ 34,\ 31,\ 18,\  9,\  8,\ 19,\ 30,\ 27,\ 22,\  3,\  2,\ 23,\ 26,\\
    &2n+35,\ 6,\ 5)
\end{align*}
give us irreducible meanders of orders $2n+32$ and $2n+35$, respectively. 

Thus we have the following inequalities
\begin{align*}
    \liminf_{n\to \infty} \sqrt[n]{\M^{(Irr)}_n} \geq \lim_{n\to \infty} \sqrt[n]{\M_{\frac{n-35}{2}}} = \sqrt{\lim_{n\to \infty} \sqrt[n]{\M_{n}}}.
\end{align*}

The results of~\cite{AP05} imply that $\lim\limits_{n\to \infty} \sqrt[n]{{\M}_{n}}\geq \sqrt{11.38}$, and we finally get
\begin{equation}
     \liminf_{n\to \infty} \sqrt[n]{\M^{(Irr)}_n} \geq \sqrt[4]{11.38} \approx 1.83669.
\end{equation}
\begin{remark}
This estimate could be improved if we take some suitable series of ``starting'' meanders, instead of one with the permutation $$(11, 10, 1, 2, 9, 12, 13, 8, 3, 4, 7, 14,15, 6, 5).$$
\end{remark}


\begin{thebibliography}{DGZZ20}
\bibitem[Ad94]{Ad94}
Adams, C. C.
The Knot Book: An Elementary Introduction to the Mathematical Theory of Knots,
New York: W. H. Freeman and Company, 1994.

\bibitem[AP05]{AP05}
{M.~H.~Alberta, M.~S.~Paterson,} 
Bounds for the growth rate of meander numbers,
Journal of Combinatorial Theory, Series A
Vol. 112, Issue 2, November 2005, Pages 250--262.

\bibitem[B21]{B21}
{Y.~Belousov,} 
Irreducible meanders,
In preparation, 2021.

\bibitem[BM19]{BM19}
Y.~Belousov, A.~V.~Malyutin,
``Hyperbolic knots are not generic.''
(2019): preprint arXiv:1908.06187.

\bibitem[C03]{C03}
{M.~L.~Croix,} 
Approaches to the Enumerative Theory of Meanders, 2003.

\bibitem[DGZZ20]{DGZZ20}
{V.~Delecroix, E.~Goujard, P.~Zograf, A.~Zorich}, 
Enumeration of meanders and Masur-Veech volumes, 
Forum of Mathematics, Pi, Vol. 8, e4, (2020).

\bibitem[FGG95]{FGG95}
{P.~Francesco, O.~Golinelli, E.~Guitter},
Meander, Folding and Arch Statistics,
Mathematics of Computation, 1995.

\bibitem[LZ93]{LZ93}
{S.~Lando, A.~Zvonkin},
Plane and projective meanders,
Theoretical Computer Science,
Vol. 117, Issues 1--2, 1993, Pages 227--241.

\end{thebibliography}
\end{document}